\theoremstyle{plain}
\newtheorem{theorem}{Theorem}[section]
\theoremstyle{plain}
\newtheorem{lemma}[theorem]{Lemma}
\theoremstyle{plain}
\newtheorem{corollary}[theorem]{Corollary}
\theoremstyle{definition}
\newtheorem{definition}[theorem]{Definition}
\theoremstyle{plain}
\newtheorem{proposition}[theorem]{Proposition}
\theoremstyle{remark}
\newtheorem{remark}[theorem]{Remark}
\theoremstyle{definition}
\newtheorem{example}[theorem]{Example}
\theoremstyle{plain}
\theoremstyle{plain}
\theoremstyle{plain}
\newtheorem{question}[theorem]{Question}
\title[an alternative description of coarse proximities]{Normality conditions of structures in coarse geometry and an alternative description of coarse proximities}
\author{Pawel Grzegrzolka}
\address{University of Tennessee, Knoxville, USA}
\email{pgrzegrz@vols.utk.edu}
\author{Jeremy Siegert}
\address{University of Tennessee, Knoxville, USA} 
\email{jsiegert@vols.utk.edu}
\date{\today} 
\keywords{coarse geometry, coarse topology, coarse proximity, proximity, large scale normality, coarse normality, alternative definition of coarse proximities, coarse spaces, asymptotic resemblance spaces\\
published in \textit{Topology Proceedings}, 53:285-299, 2019}
\subjclass[2010]{54E05, 54E15, 51F99}
\begin{document}

\begin{abstract}
We introduce an alternative description of coarse proximities. We define a coarse normality condition for connected coarse spaces and show that this definition agrees with large scale normality defined in \cite{DydakandWeighill} and asymptotic normality defined in \cite{Honari}. We utilize the alternative definition of coarse proximities to show that a connected coarse space naturally induces a coarse proximity if and only if the connected coarse space is coarsely normal. We conclude with showing that every connected asymptotic resemblance space induces a coarse proximity if and only if the connected asymptotic resemblance space is asymptotically normal.
\end{abstract}

\maketitle
\tableofcontents

\section{Introduction}
Coarse topology (i.e., large-scale geometry) studies large-scale properties of spaces (e.g., asymptotic dimension, property A, exactness). It emerged as a counterpart to classical topology, which is usually concerned with small-scale properties of spaces (e.g., continuity, compactness). Tools and techniques developed by coarse topologists are often useful in other branches of mathematics, including geometric group theory (see \cite{Guentner}), index theory (see \cite{indextheory}), and dimension theory (see \cite{dimensiontheory}). Coarse topology is also closely related to well-known conjectures, including the Novikov conjecture (see \cite{Novikov}) and the coarse Baum-Connes conjecture (see \cite{Baum-Connes} or \cite{Baum}).

Recently in \cite{Coarseproximity} coarse proximities were introduced as a large-scale counterpart to (small-scale) proximities. Coarse proximities capture the intuitive notion of two sets being "close" at infinity. The motivation for these structures comes in part from the utility of small-scale proximity relations in classical topology. More specifically, compatible proximity relations on a Tychonoff topological space classify the compactifications of that space. One can hope to port some of this utility to the large-scale context with coarse proximities. Another reason to work on coarse proximity spaces, aside from the linguistic one just mentioned, is the presence of proximity ideas in extant coarse geometry literature. In \cite{asymptoticinductivedimension}, Dranishnikov introduced the notion of subsets of a metric space being asymptotically disjoint, and in \cite{asymptotictopology} he introduced the notion of asymptotic neighbourhoods of subsets of metric spaces. In \cite{asymptoticinductivedimension}, the relation of asymptotic disjointness was used to define a dimensional coarse invariant of proper metric spaces, the asymptotic inductive dimension. It has been shown that for proper metric spaces of positive but finite asymptotic dimension the asymptotic inductive dimension and the asymptotic dimension agree (see \cite{asymptoticinductivedimension}). In \cite{bellanddranishnikov}, Bell and Dranishnikov used the notion of asymptotic disjointness to define another coarse dimensional invariant, the asymptotic Brouwer inductive dimension. In \cite{Coarseproximity}, it was shown that the relation of being asymptotically disjoint is equivalent to the negation of the metric coarse proximity relation. Likewise, in \cite{Coarseproximity} it was shown that the asymptotic neighbourhoods of Dranishnikov are equivalent to coarse neighbourhoods in the metric coarse proximity structure. Aside from its utility in defining the aforementioned coarse invariants, coarse proximity notions were used in \cite{Coarseproximity} to define the "proximity space at infinity:" a (small-scale) proximity space that corresponds to the coarse proximity structure induced by an unbounded metric space. The construction of the proximity space at infinity extensively utilizes the coarse neighbourhoods of the metric coarse proximity structure. The proximity space at infinity is shown to comprise a functor from the category of unbounded metric spaces whose morphisms are closeness classes of coarse proximity maps (or coarse maps) to the category of proximity spaces whose morphisms are proximity maps. Consequently, the proximity isomorphism type of the proximity at infinity of an unbounded metric space $X$ is a coarse invariant of $X$. 

The focus of this paper is to characterize part of the relationship coarse spaces (for an introduction to coarse spaces, see \cite{Roe}) have with coarse proximities. In the small-scale context, every uniform structure (defined via entourages or uniform covers) induces a proximity relation that is compatible with the topology of the original uniform structure. It is natural to ask whether or not the large-scale analog of uniform structures, coarse spaces, have a similar relationship with coarse proximities. To aid in answering this question, we provide an alternative characterization of coarse proximity spaces in terms of coarse neighbourhoods in section \ref{Alternative_Definition_of_Coarse_Proximity}. In section \ref{large_scale_normality_section}, coarse normality of coarse spaces is introduced. We show that our definition of coarse normality is equivalent to the large scale normality of Dydak and Weighill as introduced in \cite{DydakandWeighill}. We also show that a coarse space being coarsely normal is equivalent to its induced asymptotic resemblance structure being asymptotically normal, as defined in \cite{Honari}. Our alternative characterization of coarse proximities reveals its utility in section \ref{inducing coarse proximities}, where it is used to answer our original question. Specifically, it is used to show that a coarse space naturally induces a coarse proximity if and only if the coarse space is coarsely connected and coarsely normal. By "natural" we mean that the proximity space induced by that coarse structure agrees with the metric coarse proximity when the underlying space is a metric space and the coarse structure is induced by that metric. In other words, this result generalizes the result from \cite{Coarseproximity}, where it is shown that every metric space induces a coarse proximity structure. As a corollary, we also obtain that an asymptotic resemblance (for an introduction to asymptotic resemblances, see \cite{Honari}) induces a coarse proximity if and only if it is asymptotically connected and asymptotically normal. We conclude with three equivalent characterizations of the coarse proximity induced by a coarsely connected and coarsely normal coarse space.

\section{An Alternative Definition of Coarse Proximities}\label{Alternative_Definition_of_Coarse_Proximity}

In this section, we introduce a definition of a coarse proximity in terms of coarse neighborhoods. Recall the following three definitions from \cite{Coarseproximity}:

\begin{definition}
	A {\bf bornology} $\mathcal{B}$ on a set $X$ is a family of subsets of $X$ satisfying:
	\begin{enumerate}[(i)]
		\item $\{x\}\in\mathcal{B}$ for all $x\in X,$
		\item $A\in\mathcal{B}$ and $B\subseteq A$ implies $B\in\mathcal{B},$
		\item If $A,B\in\mathcal{B},$ then $A\cup B\in\mathcal{B}.$
	\end{enumerate}
Elements of $\mathcal{B}$ are called {\bf bounded} and subsets of $X$ not in $\mathcal{B}$ are called {\bf unbounded}.
\end{definition}

\begin{definition}\label{coarseproximitydefinition}
Let $X$ be a set equipped with a bornology $\mathcal{B}$. A \textbf{coarse proximity} on a set $X$ is a relation ${\bf b}$ on the power set of $X$ satisfying the following axioms for all $A,B,C \subseteq X:$

\begin{enumerate}[(i)]
	\item $A{\bf b}B$ implies $B{\bf b}A,$ \label{axiom1}
	\item $A{\bf b}B$ implies $A \notin \mathcal{B}$ and $B \notin \mathcal{B},$ \label{axiom2}
	\item $A\cap B \notin \mathcal{B}$ implies $A {\bf b} B,$ \label{axiom3}
	\item $(A \cup B){\bf b}C$ if and only if $A{\bf b}C$ or $B{\bf b}C,$ \label{axiom4}
	\item $A\bar{\bf b}B$ implies that there exists a subset $E$ such that $A\bar{\bf b}E$ and $(X\setminus E)\bar{\bf b}B,$ \label{axiom5}
\end{enumerate}
where $A\bar{ {\bf b}}B$ means "$A{\bf b} B$ is not true." If $A {\bf b} B$, then we say that $A$ is \textbf{coarsely close} to (or \textbf{coarsely near}) $B.$ Axiom (\ref{axiom4}) will be called the \textbf{union axiom} and axiom (\ref{axiom5}) will be called the \textbf{strong axiom}. A triple $(X,\mathcal{B},{\bf b})$ where $X$ is a set, $\mathcal{B}$ is a bornology on $X$, and ${\bf b}$ is a coarse proximity relation on $X,$ is called a {\bf coarse proximity space}.
\end{definition}

\begin{definition}\label{neighborhood_definition}
	Let $(X,\mathcal{B},{\bf b})$ be a coarse proximity space. Given subsets $A,B\subseteq X,$ we say that $B$ is a {\bf b-coarse neighborhood} (or just \textbf{coarse neighborhood} if the proximity relation is clear) of $A,$ denoted $A\ll B,$ if $A\bar{\bf b}(X\setminus B)$. 
\end{definition}

\begin{theorem}\label{propertiesofcoarseneighborhoods}
Given a coarse proximity space $(X,\mathcal{B},{\bf b}),$ the relation $\ll $ satisfies the following properties:
\begin{enumerate}
\item $X\ll (X \setminus D)$ for all $D \in \mathcal{B},$ \label{boundeddontmatter}
\item $A \ll B$ implies that $A \subseteq B$ up to some bounded set $D,$ i.e., there exists $D \in \mathcal{B}$ such that $A\setminus D \subseteq B,$ \label{containment}
\item $A \subseteq B \ll C \subseteq D$ implies $A\ll D,$ \label{subsetandsuperset}
\item $A \ll B_1$ and $A \ll B_2$ if and only if $A \ll (B_1 \cap B_2),$ \label{intersection}
\item $A\ll B$ if and only if $(X \setminus B)\ll (X \setminus A),$ \label{neighborhoodofcomplement}
\item $A\ll B$ implies that there exists $C \subseteq X$ such that $A\ll C \ll B.$ \label{intermediate}
\end{enumerate}
\end{theorem}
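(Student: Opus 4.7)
The plan is to unpack the definition $A \ll B \iff A \bar{\mathbf{b}}(X \setminus B)$ in each item and match it with the appropriate axiom of Definition \ref{coarseproximitydefinition}. Most of the properties are short translations; only item (\ref{intermediate}) genuinely uses the strong axiom.

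First I would dispatch the easy translations. For (\ref{boundeddontmatter}), observe that $X \ll (X\setminus D)$ is $X \bar{\mathbf{b}} D$, which follows from axiom (\ref{axiom2}) because $D \in \mathcal{B}$. For (\ref{containment}), the contrapositive of axiom (\ref{axiom3}) applied to $A$ and $X\setminus B$ gives $A \cap (X\setminus B) \in \mathcal{B}$, and taking $D := A \setminus B$ yields $A \setminus D \subseteq B$. For (\ref{subsetandsuperset}), from $A \subseteq B$ and $X\setminus D \subseteq X\setminus C$, I write $B = A \cup (B \setminus A)$ and apply the union axiom (\ref{axiom4}) in contrapositive form to conclude $A \bar{\mathbf{b}}(X\setminus C)$, and then note that $(X\setminus D)\mathbf{b} A$ would imply $(X\setminus C)\mathbf{b} A$ (again by the union axiom applied to $X\setminus C = (X\setminus D) \cup ((X\setminus C)\setminus(X\setminus D))$), giving $A \bar{\mathbf{b}}(X\setminus D)$.

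Next I would handle (\ref{intersection}) by rewriting $X \setminus (B_1 \cap B_2) = (X\setminus B_1) \cup (X\setminus B_2)$ and applying the union axiom (\ref{axiom4}) directly: $A \mathbf{b} ((X\setminus B_1) \cup (X\setminus B_2))$ iff $A \mathbf{b} (X\setminus B_1)$ or $A\mathbf{b}(X\setminus B_2)$, whose contrapositive is the claim. For (\ref{neighborhoodofcomplement}), I would just rewrite: $A \ll B$ says $A\bar{\mathbf{b}}(X\setminus B)$, which by symmetry (axiom (\ref{axiom1})) equals $(X\setminus B)\bar{\mathbf{b}} A = (X\setminus B)\bar{\mathbf{b}}(X\setminus(X\setminus A))$, which is exactly $(X\setminus B) \ll (X \setminus A)$.

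The only step with any real content is (\ref{intermediate}), and this is where I apply the strong axiom (\ref{axiom5}). Given $A \ll B$, that is $A \bar{\mathbf{b}}(X\setminus B)$, axiom (\ref{axiom5}) produces a set $E \subseteq X$ with $A \bar{\mathbf{b}} E$ and $(X \setminus E)\bar{\mathbf{b}}(X\setminus B)$. I would then set $C := X \setminus E$; the first condition becomes $A \bar{\mathbf{b}}(X\setminus C)$, i.e.\ $A \ll C$, while the second is precisely $C \ll B$. I do not anticipate a serious obstacle in any of these steps, since the whole theorem is essentially a dictionary translating the five axioms on $\mathbf{b}$ into the corresponding statements for $\ll$; the only subtlety is being careful with complements (especially in (\ref{neighborhoodofcomplement}) and (\ref{intermediate})) so that the right axiom is invoked on the right set.
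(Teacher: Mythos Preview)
Your proposal is correct and follows essentially the same approach as the paper: each item is obtained by unpacking $A\ll B\iff A\bar{\mathbf b}(X\setminus B)$ and invoking the matching axiom, with the strong axiom producing the intermediate set $C=X\setminus E$ in (\ref{intermediate}). The only cosmetic difference is in (\ref{subsetandsuperset}), where the paper argues by contradiction while you use the contrapositive form of the union axiom directly; both are equivalent.
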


\begin{proof}
Axiom (\ref{axiom2}) of a proximity space implies that bounded sets are not related to any sets. Thus, $X\bar{\bf b} D$ for any $D \in \mathcal{B}.$ This is the same as saying $X\bar{\bf b} (X \setminus (X \setminus D))$ for any $D \in \mathcal{B},$ or equivalently $X\ll (X \setminus D)$ for any $D \in \mathcal{B},$ which is the statement of (\ref{boundeddontmatter}). To show (\ref{containment}), notice that if $A \cap (X \setminus B) \notin \mathcal{B},$ then $A{\bf b} (X \setminus B),$ a contradiction to $A\ll B.$ To show (\ref{subsetandsuperset}), for contradiction assume that $A\not\ll D,$ i.e., $A{\bf b}(X\setminus D).$ The union axiom implies then that $B{\bf b}(X\setminus D).$ Since $(X\setminus D) \subseteq (X\setminus C),$ again by the union axiom we get $B{\bf b}(X\setminus C),$ a contradiction to $B \ll C.$
To show (\ref{intersection}), notice that by the union axiom
\begin{equation*}
\begin{split}
A \ll B_1 \text { and } A \ll B_2 & \Longleftrightarrow A\bar{\bf b} (X \setminus B_1) \text{ and } A\bar{\bf b} (X \setminus B_2)\\
& \Longleftrightarrow A\bar{\bf b} ((X \setminus B_1) \cup (X \setminus B_2))\\
& \Longleftrightarrow A\bar{\bf b} (X \setminus (B_1 \cap B_2))\\
& \Longleftrightarrow A \ll (B_1 \cap B_2).
\end{split}
\end{equation*}
 To show (\ref{neighborhoodofcomplement}), notice that 
\begin{equation*}
\begin{split}
A\ll B & \Longleftrightarrow A\bar{\bf b} (X \setminus B)\\
& \Longleftrightarrow (X \setminus B)\bar{\bf b}A\\
& \Longleftrightarrow (X \setminus B)\bar{\bf b} (X \setminus (X \setminus A))\\
& \Longleftrightarrow (X \setminus B)\ll (X \setminus A).
\end{split}
\end{equation*}
To show (\ref{intermediate}), assume $A\ll B,$ i.e., $A \bar{\bf b} (X \setminus B).$ The strong axiom implies that there exists $E \subseteq X$ such that $A \bar{\bf b} E$ and $(X \setminus E) \bar{\bf b} (X \setminus B).$ In other words, we have that $A \bar{\bf b} (X \setminus (X \setminus E))$ and  $(X \setminus E) \bar{\bf b} (X \setminus B),$ i.e., $A\ll (X \setminus E) \ll B.$ Setting $C=(X \setminus E)$ gives the desired result.
\end{proof}

\begin{theorem}\label{<<inducesproximity}
Let $X$ be a set with bornology $\mathcal{B}.$ Let $\ll $ be a binary relation on the power set of $X$ satisfying $(1)$ through $(6)$ of Theorem \ref{propertiesofcoarseneighborhoods}. Let ${\bf b}$ be a relation on the power set of $X$ defined by
\[A\bar{{\bf b}} B \quad \text{if and only if} \quad A\ll (X \setminus B).\]
Then ${\bf b}$ is a coarse proximity on $X.$ Also, $B$ is a ${\bf b}$-coarse neighborhood of $A$ if and only if $A\ll B.$
\end{theorem}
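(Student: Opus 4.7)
The plan is to verify each of the five axioms of Definition~\ref{coarseproximitydefinition} in turn by translating it across the defining equivalence $A\bar{\bf b}B\Leftrightarrow A\ll(X\setminus B)$ and invoking the matching property of $\ll$ from Theorem~\ref{propertiesofcoarseneighborhoods}. Once the five axioms are verified, the final claim is immediate: $B$ is a ${\bf b}$-coarse neighborhood of $A$ means $A\bar{\bf b}(X\setminus B)$, which unfolds to $A\ll X\setminus(X\setminus B)=A\ll B$.

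For axiom~(\ref{axiom1}), property~(\ref{neighborhoodofcomplement}) gives $A\ll X\setminus B$ if and only if $B\ll X\setminus A$, which is exactly $A\bar{\bf b}B$ iff $B\bar{\bf b}A$. For axiom~(\ref{axiom2}) I would argue by contrapositive: if $B\in\mathcal{B}$, then property~(\ref{boundeddontmatter}) yields $X\ll X\setminus B$, and property~(\ref{subsetandsuperset}) applied to $A\subseteq X\ll X\setminus B\subseteq X\setminus B$ gives $A\ll X\setminus B$, i.e., $A\bar{\bf b}B$; the symmetry just established handles $A\in\mathcal{B}$. For axiom~(\ref{axiom3}), again contrapositively, $A\bar{\bf b}B$ unfolds to $A\ll X\setminus B$, and property~(\ref{containment}) produces $D\in\mathcal{B}$ with $A\setminus D\subseteq X\setminus B$; hence $A\cap B\subseteq D\in\mathcal{B}$.

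The union axiom~(\ref{axiom4}) is the step requiring the most care, because $\ll$ is asymmetric in its two arguments. I would rewrite $(A\cup B)\bar{\bf b}C$ as $(A\cup B)\ll X\setminus C$, apply~(\ref{neighborhoodofcomplement}) to pass to $C\ll(X\setminus A)\cap(X\setminus B)$, use~(\ref{intersection}) to split this into $C\ll X\setminus A$ and $C\ll X\setminus B$, and then apply~(\ref{neighborhoodofcomplement}) once more to each conjunct to recover $A\bar{\bf b}C$ and $B\bar{\bf b}C$. Every step is an equivalence, so both directions of axiom~(\ref{axiom4}) fall out simultaneously. Finally, for the strong axiom~(\ref{axiom5}), if $A\bar{\bf b}B$ then $A\ll X\setminus B$, and property~(\ref{intermediate}) supplies $C$ with $A\ll C\ll X\setminus B$; setting $E:=X\setminus C$ makes $A\ll X\setminus E$ (so $A\bar{\bf b}E$) and $X\setminus E=C\ll X\setminus B$ (so $(X\setminus E)\bar{\bf b}B$), as required.

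I do not expect a serious obstacle: the theorem is essentially a bookkeeping exercise asserting that the six properties of Theorem~\ref{propertiesofcoarseneighborhoods} were chosen so as to correspond, under complementation of the second argument, to the five coarse proximity axioms. The only step where one has to pause is the union axiom, where the shift between $A\cup B$ on the left of $\ll$ and $(X\setminus A)\cap(X\setminus B)$ on the right is mediated by~(\ref{neighborhoodofcomplement}) before~(\ref{intersection}) applies.
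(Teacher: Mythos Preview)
Your proposal is correct and follows essentially the same approach as the paper, matching each axiom to the corresponding property of $\ll$ via the definition $A\bar{\bf b}B\Leftrightarrow A\ll(X\setminus B)$. The only cosmetic difference is in axiom~(\ref{axiom4}): the paper treats the two directions separately (one via property~(\ref{subsetandsuperset}), the other via symmetry and property~(\ref{intersection})), whereas you handle both at once as a chain of equivalences through~(\ref{neighborhoodofcomplement}) and~(\ref{intersection}).
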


\begin{proof}
To show axiom (\ref{axiom1}), assume $A \bar{\bf b} B$. Then $A\ll (X \setminus B),$ which by (\ref{neighborhoodofcomplement}) implies that  $B\ll (X \setminus A),$ i.e., $B \bar{\bf b} A.$ To show axiom (\ref{axiom2}), notice that (\ref{boundeddontmatter}) \and (\ref{subsetandsuperset}) imply that $A \ll (X\setminus B)$ for all $B \in \mathcal{B},$ i.e., $A \bar{\bf b} B$ for all $B \in \mathcal{B}.$ By symmetry proven in axiom (\ref{axiom1}), this implies axiom (\ref{axiom2}). To show axiom (\ref{axiom3}), assume $A \bar{\bf b} B,$ i.e., $A\ll (X \setminus B).$ By (\ref{containment}), this means that there exists $D \in \mathcal{B}$ such that $(A\setminus D) \subseteq (X \setminus B),$ which is the the same as saying that $(A\setminus D) \cap B = \emptyset.$ Thus, $A \cap B\subseteq D,$ showing that $A \cap B \in \mathcal{B}.$ To show axiom (\ref{axiom4}), first assume $(A\cup B) \bar{\bf b} C,$ i.e., $(A \cup B) \ll (X \setminus C).$ Property (\ref{subsetandsuperset}) implies that $A \ll (X \setminus C)$ and $B \ll (X \setminus C),$ i.e., $A\bar{\bf b} C$ and $B\bar{\bf b} C.$ To prove the forward direction, assume $(A\cup B){\bf b} C,$ which by symmetry gives us $C{\bf b} (A\cup B),$ i.e., $C\not\ll X\setminus (A\cup B).$ This is the same as saying $C\not\ll ((X\setminus A)\cap (X \setminus B))$ which by (\ref{intersection}) implies that $C\not\ll (X\setminus A)$ or $C\not\ll (X \setminus B),$ i.e. $C {\bf b} A$ or $C {\bf b} B.$ This again by symmetry implies that $A {\bf b} C$ or $B {\bf b} C,$ proving (\ref{axiom4}). To show the strong axiom, assume $A\bar{\bf b}B,$ i.e., $A\ll (X \setminus B).$ Therefore, by (\ref{intermediate}), there exists $C \subseteq X$ such that $A\ll C\ll (X \setminus B),$ or equivalently$A\ll (X \setminus (X \setminus C))\ll (X \setminus B).$ This implies that $A \bar{\bf b} (X \setminus C)$ and $C \bar{\bf b} B.$ Let $E=X \setminus C.$ Then $A \bar{\bf b} E$ and $(X\setminus E) \bar{\bf b} B.$ Finally, notice that
\begin{equation*}
\begin{split}
B \text{ is a ${\bf b}$-coarse neighborhood of }A & \Longleftrightarrow A \bar{\bf b} (X \setminus B)\\
& \Longleftrightarrow A\ll (X \setminus (X\setminus B))\\
& \Longleftrightarrow A\ll B. \qedhere
\end{split}
\end{equation*}
\end{proof}

The reader is encouraged to compare the above theorems with the similar theorems for small-scale proximity spaces (see Theorem $3.9$ and Theorem $3.11$ in \cite{proximityspaces}). 
Recall that a (small-scale) \textbf{proximity space} is a pair $(X,\delta),$ where $X$ is a set and $\delta$ is a relation on the power set of $X$ that satisfies all the axioms of Definition \ref{coarseproximitydefinition} with "$ \notin \mathcal{B}$" replaced by "$\neq \emptyset$". A $\delta$-neighborhood, with $B$ being a $\delta$-neighbourhood of $A$ being denoted by $A\ll B$, is defined as in Definition \ref{neighborhood_definition} with $\bar{\bf b}$ replaced by $\bar{\delta}.$ 
The above two theorems show the utility of the similarity of definitions of coarse proximities and proximities. For example, the proofs of properties (\ref{subsetandsuperset}) through (\ref{intermediate}) of Theorem \ref{propertiesofcoarseneighborhoods} only use axioms (\ref{axiom1}),(\ref{axiom4}), and (\ref{axiom5}) of coarse proximities. Since these axioms are exactly the same for small-scale proximities, small-scale proximities also satisfy properties (\ref{subsetandsuperset}) through (\ref{intermediate}) of Theorem \ref{propertiesofcoarseneighborhoods} (with $\delta$-neighborhoods replacing coarse neighborhoods).

\begin{definition}
In the setting of the above theorem, we say that the relation $\ll$ \textbf{induces a coarse proximity on the pair} $(X, \mathcal{B}).$
\end{definition}

\begin{remark}
One can show (see \cite{Coarseproximity}) that assuming (\ref{axiom1}) through (\ref{axiom4}) of Definition \ref{coarseproximitydefinition}, the strong axiom of a proximity space is equivalent to the property (\ref{intermediate}) of Theorem \ref{propertiesofcoarseneighborhoods}.
\end{remark}

\section{Coarse Normality}\label{large_scale_normality_section}

In this section, we introduce coarse normality of coarse spaces. We also show that for connected coarse spaces it agrees with large-scale normality introduced by Dydak and Weighill in \cite{DydakandWeighill} and  with asymptotic normality introduced by Honari and Kalantari in \cite{Honari}.

Let us first recall basic definitions related to coarse spaces (from \cite{Roe}) and asymptotic resemblance spaces (from \cite{Honari}). An experienced reader may want to skip ahead to Definition \ref{importantdefinition} and refer to the beginning of this section when necessary. The following $3$ definitions and an example come from \cite{Roe}:

\begin{definition}
A \textbf{coarse structure} on a set $X$ is is a collection $\mathcal{E}$ of subsets of $X \times X,$ called \textbf{controlled sets} or \textbf{entourages}, such that the following are satisfied:
\begin{enumerate}[(i)]
\item $\triangle \in \mathcal{E}$, where $\triangle:=\{(x, x) \mid x \in X\},$
\item if $E \in \mathcal{E}$ and $B \subseteq E,$ then $B \in \mathcal{E},$
\item if $E \in \mathcal{E},$ then $E^{-1} \in \mathcal{E},$ where $E^{-1}:=\{(x,y) \mid (y,x) \in E\},$
\item if $E \in \mathcal{E}$ and $F \in \mathcal{E},$ then $E \cup F \in \mathcal{E},$
\item if $E \in \mathcal{E}$ and $F \in \mathcal{E},$ then $E \circ F \in \mathcal{E},$ where $E \circ F:=\{(x,y) \mid \exists \, z \in X \text{ such that } (x,z) \in E, (z,y) \in F\}.$
\end{enumerate}
A set $X$ endowed with a coarse structure $\mathcal{E}$ is called a \textbf{coarse space}.
\end{definition}

\begin{example}
Let $(X,d)$ be a metric space. For each $r \in \mathbb{R}^+,$ define
\[E_r=\{(x,y) \in X \times X \mid d(x,y)<r\}.\]
Let $\mathcal{E}$ be the collection of all the subsets of such sets $E_r.$ Then $\mathcal{E}$ is a coarse structure, called \textbf{metric coarse structure.}
\end{example}

For more examples of coarse spaces, the reader is referred to \cite{Roe}.

\begin{definition}
If $(X, \mathcal{E})$ is a coarse space, $A$ a subset of $X,$ and $E$ a controlled set, then we define
\[E[A]=\{x \in X \mid \exists \, a \in A \text { such that } (x,a) \in E\}.\]
\end{definition}

\begin{definition}
If $(X, \mathcal{E})$ is a coarse space and $A$ a subset of $X,$ then we say that $A$ is (coarsely) \textbf{bounded} if $A \times A$ is a controlled set. If $A$ is not bounded, then we say that $A$ is (coarsely) \textbf{unbounded}.
\end{definition}

\begin{proposition}
If $(X, \mathcal{E})$ is a (coarsely) \textbf{connected coarse space} (i.e., each point of $X \times X$ belongs to some controlled set), then the collection of bounded sets forms a bornology on $X,$ which we call the \textbf{bornology induced by $\mathcal{E}$}.
\end{proposition}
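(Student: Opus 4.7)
The plan is to verify the three bornology axioms in turn, with the first two being immediate from the coarse-structure axioms and the third being the only place where coarse connectedness is required.

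For axiom (i), I note that $\{x\} \times \{x\} = \{(x,x)\} \subseteq \triangle$, so downward closure of $\mathcal{E}$ gives $\{x\} \times \{x\} \in \mathcal{E}$, i.e., $\{x\}$ is bounded. For axiom (ii), if $A \times A \in \mathcal{E}$ and $B \subseteq A$, then $B \times B \subseteq A \times A$, so $B \times B \in \mathcal{E}$ by the same closure property. Neither step requires connectedness.

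The substantive step is axiom (iii). Given bounded sets $A, B \subseteq X$, I want to show $(A \cup B) \times (A \cup B) \in \mathcal{E}$. If either set is empty the conclusion is trivial (the other set is already bounded), so assume both are nonempty. I will use the decomposition
\[(A \cup B) \times (A \cup B) = (A \times A) \cup (A \times B) \cup (B \times A) \cup (B \times B).\]
The diagonal blocks $A \times A$ and $B \times B$ lie in $\mathcal{E}$ by hypothesis, so by closure under finite unions it suffices to show the off-diagonal blocks are controlled. Here I invoke connectedness: picking $a \in A$ and $b \in B$, the assumption provides a controlled set containing $(a,b)$, whence $\{(a,b)\} \in \mathcal{E}$ by downward closure. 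The key identity is
\[A \times B \;=\; (A \times \{a\}) \circ \{(a,b)\} \circ (\{b\} \times B),\]
and each factor is controlled: the outer factors are subsets of $A \times A$ and $B \times B$ respectively, and the middle factor was just shown to be in $\mathcal{E}$. Closure of $\mathcal{E}$ under composition then gives $A \times B \in \mathcal{E}$, and closure under inverses (or a symmetric argument) yields $B \times A \in \mathcal{E}$.

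The only real obstacle is finding the right way to exploit connectedness to link $A$ and $B$, and the ``conjugation'' trick of sandwiching a singleton entourage between $A \times \{a\}$ and $\{b\} \times B$ is what does the job. Once that identity is in hand, the remainder is a bookkeeping exercise that uses every axiom of a coarse structure: the diagonal for singletons, downward closure for extracting the relevant subsets, finite unions to reassemble the four blocks, composition to build the off-diagonal blocks, and inverses (or a repeat of the composition argument) to handle $B \times A$.
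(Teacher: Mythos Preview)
Your argument is correct and is exactly the standard verification one has in mind; the paper itself simply records the proof as ``Straightforward'' and omits the details. In particular, your use of connectedness via the factorization $A\times B=(A\times\{a\})\circ\{(a,b)\}\circ(\{b\}\times B)$ is the canonical way to obtain the off-diagonal blocks, so there is nothing to add.
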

\begin{proof}
Straightforward. 
\end{proof}

The following definitions and examples (up to Defintion \ref{definition6123}) come from \cite{Honari}:

\begin{definition}\label{asymptoticresemblance}
Let $X$ be a set. Let $\lambda$ be an equivalence relation on the power set of $X.$ Then $\lambda$ is called an \textbf{asymptotic resemblance} if it satisfies the following properties:
\begin{enumerate}[(i)]
\item $A_1\lambda B_1,\,A_2\lambda B_2\text{ implies }(A_1\cup A_2)\lambda(B_1\cup B_2),$ \label{condition1}
\item $(B_{1}\cup B_{2})\lambda A$ and $B_{1},B_{2}\neq\emptyset$ implies that there are nonempty $A_{1},A_{2}\subseteq A$ such that $A=A_{1}\cup A_{2}$, $B_{1}\lambda A_{1}$, and $B_{2}\lambda A_{2}$. \label{condition2}
\end{enumerate}
A pair $(X, \lambda),$ where $X$ is a set and $\lambda$ is an asymptotic resemblance, is called an \textbf{asymptotic resemblance space}.
\end{definition}

\begin{example}
Let $(X,d)$ be a metric space, and $A,C \subseteq X.$ Let $B(A,r)$ denote the neighborhood of radius $r$ around $A,$ i.e., $B(A,r)=\{x \in X \mid \exists \, a \in A \text{ such that } d(x,a)<r\}$. Define a relation $\lambda$ on the power set of $X$ by
\[A \lambda C \quad  \text{if and only if} \quad \exists \,  r>0 \text{ such that } A\subseteq B(C,r)\text{ and }C\subseteq B(A,r),\]
i.e., the Hausdorff distance between $A$ and $C$ is finite. Then $\lambda$ is an asymptotic resemblance, called the \textbf{metric asymptotic resemblance} or \textbf{asymptotic resemblance induced by the metric $d$.}
\end{example}

\begin{example}\label{definition6}
Let $\mathcal{E}$ be a coarse structure on a set $X.$ For any two subsets $A$ and $B$ of $X,$ define $A \lambda_{\mathcal{E}}B$ if $A \subseteq E[B]$ and $B \subseteq E[A]$ for some $E \in \mathcal{E}.$ Then the relation $\lambda_{\mathcal{E}}$ is an asymptotic resemblance on $X$. We call $\lambda_{\mathcal{E}}$ the \textbf{asymptotic resemblance induced by the coarse structure $\mathcal{E}$}.
\end{example}

For more examples of asymptotic resemblance spaces, the reader is referred to \cite{Honari}.

\begin{definition}\label{definition612}
Let $(X, \lambda)$ be an asymptotic resemblance space. Then $A \subseteq X$ is called (asymptotically) $\textbf{bounded}$ if $A$ is empty or there exists $x \in X$ such that $A\lambda x.$ If $A$ is not (asymptotically) bounded, then we say that $A$ is (asymptotically) \textbf{unbounded.}
\end{definition}

\begin{definition}\label{definition6123}
	Two subsets $A,C$ of an asymptotic resemblance space $(X,\lambda)$ are called \textbf{asymptotically disjoint} if for all asymptotically unbounded subsets $A'\subseteq A$ and $C'\subseteq C,$ one has $A'\bar{\lambda}C'$.
\end{definition}

\begin{proposition}
If $(X, \lambda)$ is an (asymptotically) \textbf{connected asymptotic resemblance space} (i.e., $x \lambda y$ for all $x,y \in X$), then the collection of bounded sets forms a bornology on $X,$ which we call the \textbf{bornology induced by $\lambda$}.
\end{proposition}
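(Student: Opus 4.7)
The plan is to verify the three bornology axioms directly from the two axioms of an asymptotic resemblance, using connectedness only where the proof of closure under finite unions requires it. Let $\mathcal{B}$ denote the collection of $\lambda$-bounded subsets of $X$.

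For axiom (i) of a bornology, singletons $\{x\}$ are bounded since $\lambda$ is an equivalence relation on the power set of $X$ and hence reflexive, giving $\{x\}\lambda\{x\}$. This step uses neither connectedness nor the two axioms of Definition \ref{asymptoticresemblance}.

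For axiom (ii), let $A\in\mathcal{B}$ and $B\subseteq A$. The cases $A=\emptyset$, $B=\emptyset$, and $B=A$ are immediate, so assume $B$ and $A\setminus B$ are both nonempty and pick $x\in X$ with $A\lambda\{x\}$. Writing $A=B\cup (A\setminus B)$ and applying condition (\ref{condition2}) of Definition \ref{asymptoticresemblance} to the pair $(B,A\setminus B)$ and the target $\{x\}$, we obtain nonempty $A_{1},A_{2}\subseteq\{x\}$ with $A_{1}\cup A_{2}=\{x\}$ and $B\lambda A_{1}$. Since $\{x\}$ has no nonempty subsets other than itself, $A_{1}=\{x\}$, so $B\lambda\{x\}$ and $B\in\mathcal{B}$. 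This is the step I expect to require the most thought, because one has to recognize that the decomposition axiom, applied against a singleton, forces $A_{1}=\{x\}$.

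For axiom (iii), let $A,B\in\mathcal{B}$; the case where one of them is empty is trivial, so pick $x,y\in X$ with $A\lambda\{x\}$ and $B\lambda\{y\}$. Condition (\ref{condition1}) of Definition \ref{asymptoticresemblance} yields $(A\cup B)\lambda(\{x\}\cup\{y\})=\{x,y\}$. Connectedness supplies $\{x\}\lambda\{y\}$, and combining $\{x\}\lambda\{x\}$ with $\{x\}\lambda\{y\}$ via condition (\ref{condition1}) gives $\{x\}\lambda\{x,y\}$. Transitivity of the equivalence relation $\lambda$ then produces $(A\cup B)\lambda\{x\}$, so $A\cup B\in\mathcal{B}$. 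This is precisely the place where asymptotic connectedness is indispensable: without $x\lambda y$ one could not collapse the two-point set back to a single point, and nothing in the axioms of an asymptotic resemblance forces a two-element set to be bounded on its own.

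Altogether the three axioms are verified, and the proposition follows; the proof is parallel in structure to the analogous statement for coarse spaces, with condition (\ref{condition2}) playing the role that taking products of entourages plays in the coarse-space argument.
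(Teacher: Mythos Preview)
Your proof is correct and supplies exactly the details the paper omits: the paper's own proof consists of the single word ``Straightforward.'' Your verification of the three bornology axioms is sound, including the use of the decomposition axiom against a singleton for hereditariness and the use of connectedness together with condition~(\ref{condition1}) and transitivity for closure under unions.
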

\begin{proof}
Straightforward.
\end{proof}
\iffalse
\begin{remark}
Without loss of generality we can always assume that the set $E$ from Definition \ref{definition6} is symmetric and contains the diagonal.
\end{remark}
\fi

\begin{remark}
When it is clear that the asymptotic resemblance was induced by the coarse structure $\mathcal{E},$ then for the simplicity of notation we will denote $\lambda_{\mathcal{E}}$ by $\lambda.$
\end{remark}

\begin{remark}
In \cite{Honari}, it is shown that if $\lambda$ is the asymptotic resemblance induced by the coarse structure $\mathcal{E},$ then (asymptotically) bounded sets coincide with the (coarsely) bounded sets.
\end{remark}

\begin{remark}
If $\lambda$ is the asymptotic resemblance induced by the coarse structure $\mathcal{E}$ on $X,$ then one can easily show that $X$ is (coarsely) connected if and only if $X$ is (asymptotically) connected.
\end{remark}

Now we will introduce a relation on the power set of a connected coarse space that under certain conditions will induce a coarse proximity. The reader is encouraged to compare the following definition with the definition of the coarse neighborhood operator for large scale spaces, given in \cite{DydakandWeighill}.

\begin{definition}\label{importantdefinition}
	Let $(X,\mathcal{E})$ be a coarse space and let $A,B\subseteq X$ be any two subsets. Define $A\prec B,$ if for every entourage $E\in\mathcal{E},$ we have that $E[A]\subseteq B\cup K$ for some bounded set $K\subseteq X$.
\end{definition}

\begin{remark}\label{importantremark}
The above definition implies that $A \subseteq B$ up to some bounded set $K,$ i.e., $(A\setminus K) \subseteq B.$
\end{remark}

The following proposition introduces equivalent definitions of the $\prec$ relation.

\begin{proposition}\label{equivalentcondition}
Let $(X,\mathcal{E})$ be a connected coarse space, $\lambda$ the asymptotic resemblance induced by $\mathcal{E},$ $\mathcal{B}$ the collection of coarsely bounded sets, and $A,B\subseteq X$ any two subsets. Then the following are equivalent:
\begin{enumerate}[(i)]
\item $A\prec B,$ \label{item1}
\item $A$ and $X\setminus B$ are asymptotically disjoint, \label{item2}
\item For all $E \in \mathcal{E},$ there exists $D \in \mathcal{B}$ such that $\Bigl((A\setminus D) \times ((X\setminus B) \setminus D) \Bigr)\cap E= \emptyset.$ \label{item3}
\end{enumerate}
\end{proposition}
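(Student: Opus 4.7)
The plan is to prove the equivalences (i)$\Leftrightarrow$(iii) and (i)$\Leftrightarrow$(ii) separately, rather than running a single cycle, since the translation between (i) and (iii) is essentially a reformulation using inverse entourages, while linking (i) to (ii) requires producing concrete unbounded witnesses.

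For (i)$\Leftrightarrow$(iii), the key observation is that $E[A]$ records those $x$ with $(x,a)\in E$ for some $a\in A$, whereas (iii) is phrased in terms of pairs $(a,c)\in E$ with $a\in A$. These viewpoints are related by passing to $E^{-1}\in\mathcal{E}$. For (i)$\Rightarrow$(iii), given $E$, apply (i) to $E^{-1}$ to obtain a bounded $K$ with $E^{-1}[A]\subseteq B\cup K$; if $(a,c)\in E$ with $a\in A\setminus K$ and $c\in(X\setminus B)\setminus K$, then $c\in E^{-1}[A]\subseteq B\cup K$, a contradiction, so $D:=K$ works. For (iii)$\Rightarrow$(i), given $E$, apply (iii) to $E^{-1}$ to get a bounded $D$; for $x\in E[A]$ with witness $a\in A$, either $a\in D$ (so $x\in E[D]$), or $x\in B\cup D$. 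Thus $E[A]\subseteq B\cup D\cup E[D]$, and it remains to note that $E[D]$ is bounded whenever $D$ is, because $E[D]\times E[D]\subseteq E\circ(D\times D)\circ E^{-1}\in\mathcal{E}$.

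For (i)$\Rightarrow$(ii), suppose $A\prec B$ and take unbounded $A'\subseteq A$, $C'\subseteq X\setminus B$ with $A'\lambda C'$. Then there is an entourage $F$ with $C'\subseteq F[A']\subseteq F[A]$. Applying (i) to $F$ gives a bounded $K$ with $F[A]\subseteq B\cup K$; combining this with $C'\subseteq X\setminus B$ yields $C'\subseteq K$, contradicting that $C'$ is unbounded. For the converse (ii)$\Rightarrow$(i), I would argue contrapositively: assume some $E\in\mathcal{E}$ (which we enlarge, without loss of generality, to be symmetric and contain the diagonal) satisfies $E[A]\not\subseteq B\cup K$ for every bounded $K$. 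Set $C':=E[A]\setminus B$; then $C'$ is unbounded (else it would itself be the forbidden $K$). For each $c\in C'$ pick $a_c\in A$ with $(c,a_c)\in E$, and let $A':=\{a_c:c\in C'\}\subseteq A$. Symmetry of $E$ gives $A'\subseteq E[C']$ and $C'\subseteq E[A']$, so $A'\lambda C'$; moreover $A'$ must be unbounded, for otherwise $E[A']$ would be bounded by the same argument about $E[D]$ used above, forcing $C'\subseteq E[A']$ to be bounded.

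The main technical hurdle is simply the asymmetry bookkeeping between $E$ and $E^{-1}$ together with the lemma that $E[D]$ is bounded whenever $D$ is; once these are in hand, each implication is a short direct argument. No axiom of coarse structures beyond closure under inverses, composition, and subsets is needed, and connectedness is used only implicitly through the fact that the bornology $\mathcal{B}$ is well-defined.
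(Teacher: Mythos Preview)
Your proof is correct and follows essentially the same scheme as the paper's: both establish (i)$\Leftrightarrow$(ii) and (i)$\Leftrightarrow$(iii) separately, with (i)$\Leftrightarrow$(ii) argued via the same contrapositive construction of unbounded $A',C'$ witnessing $A'\lambda C'$. The only noteworthy difference is in (i)$\Leftrightarrow$(iii): you handle the orientation mismatch between $E[A]$ and the product in (iii) by explicitly passing to $E^{-1}$, whereas the paper works directly with $E$ and builds an ad hoc bounded set $D$; your bookkeeping via $E^{-1}$ and the lemma $E[D]\times E[D]\subseteq E\circ(D\times D)\circ E^{-1}$ is arguably cleaner and avoids the slight ambiguity about pair orientation present in the paper's argument.
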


\begin{proof}
((\ref{item1}) $\implies$ (\ref{item2})). Assume $A\prec B.$ For contradiction, assume that $A^{\prime}\subseteq A$ and $C^{\prime}\subseteq (X\setminus B)$ are unbounded subsets such that $A^{\prime}{\lambda}C^{\prime},$ i.e., there exists $E \subseteq \mathcal{E}$ such that $A' \subseteq E[C']$ and $C' \subseteq E[A'].$ Since $A\prec B,$ we have that $E'[A]\subseteq B\cup K$ for some bounded set $K\subseteq X.$ Since $A'\subseteq A,$ we have that 
\[C'\subseteq E[A'] \subseteq E[A] \subseteq B\cup K.\]
Thus, $(C' \setminus K) \subseteq B.$ Since $C'$ is unbounded and $K$ is bounded, $(C' \setminus K)$ is nonempty. But this is a contradiction, since  $(C' \setminus K) \subseteq (X\setminus B),$ by the definition of $C'.$

((\ref{item2}) $\implies$ (\ref{item1})). For contradiction, assume that $A\not\prec B,$ i.e., there exists $E \in \mathcal{E}$ such that $E[A] \not\subseteq B \cup K$ for any bounded $K \subseteq X.$ In other words, $E[A] \cap (X \setminus B)$ is unbounded. Without loss of generality we can assume that $E$ is symmetric. Set $C'=E[A] \cap (X \setminus B).$ For each $c \in C'$ there exists $a \in A$ such that $(c,a) \in E.$ Let $A'$ be the collection of all such $a$'s. Notice that $A'$ is unbounded, since if it is bounded, then so is $E[A'].$ But  $E[A']$ contains $C',$ so it has to be unbounded. So we have an unbounded $A' \subseteq A,$ an unbounded $C' \subseteq (X \setminus B),$ and $E \in \mathcal{E}$ such that
\[C' \subseteq E[A'] \quad \text{and} \quad A' \subseteq E[C'],\]
a contradiction to $A^{\prime}\bar{\lambda}C^{\prime}.$

((\ref{item1}) $\implies$ (\ref{item3})) Let $E \in \mathcal{E}$ be arbitrary. Without loss of generality we can assume that $E$ contains the diagonal. Since $A\prec B,$ there exists $K \in \mathcal{B}$ such that $(E[A] \setminus K) \subseteq B.$ Let $D$ be all those elements of $A$ such that $E[D] \subseteq K.$ Since $K$ is bounded, so is $E[D].$ Since $E$ contains the diagonal, $D$ is bounded as well. Thus, by the construction of $D$ we have that $E[A\setminus D] \subseteq B.$ In other words, if there exists $x \in X$ and $a \in (A \setminus D)$ such that $(x,a) \in E,$ then $x$ cannot be in $(X \setminus B).$ In particular, it cannot be in $((X\setminus B) \setminus D),$ which shows (\ref{item3}).

((\ref{item3}) $\implies$ (\ref{item1})) For contradiction, assume that $A\not\prec B,$ i.e., there exists $E \in \mathcal{E}$ such that $E[A] \cap (X \setminus B)$ is unbounded. Let $D \in \mathcal{B}$ be arbitrary. Then $C:=(E[A] \cap (X \setminus B) ) \setminus (E \cup \triangle)[D]$ is nonempty. Let $c \in C.$ Then there exists $a \in A$ such that $(c,a) \in E.$ What is more, $a \notin D.$ For if $a \in D,$ then $c \in E[D],$ a contradiction. So we have $c \in ((X \setminus B) \setminus D),$ $a \in (A \setminus D)$ and $(a,c) \in E.$ Since $D$ was an arbitrary unbounded subset, this contradicts (\ref{item3}).
\end{proof}

\begin{remark}
Notice that if $(X,\mathcal{E})$ is a connected coarse space, then the collection $\mathcal{B}$ from the above theorem is a bornology.
\end{remark}

Now we introduce a condition under which $\prec$ relation will induce a coarse proximity.

\begin{definition}
	A coarse space $(X,\mathcal{E})$ is called {\bf coarsely normal} if for every pair of subsets $A,B\subseteq X$ such that $A\prec B,$ there is a subset $C\subseteq X$ satisfying $A\prec C\prec B$.
\end{definition}

The reader familiar with \cite{DydakandWeighill} will spot an immediate resemblance to large scale normality defined for large scale structures. Indeed, after translating from large scale structures to coarse structures, the two notions coincide for connected coarse spaces, as the following lemma and proposition show:

\begin{lemma}\label{lemma1}
Let $(X,\mathcal{E})$ be a connected coarse structure, $\mathcal{B}$ the bornology induced by $\mathcal{E},$ and $D_1, D_2 \in \mathcal{B}.$ If $A$ and $B$ are two subsets of $X$ such that $A \prec B$, then the following hold: 
\begin{enumerate}[(i)]
\item $A \cup D_1 \prec B \setminus D_2,$
\item  $A \setminus D_1 \prec B \cup D_2.$
\end{enumerate}
\end{lemma}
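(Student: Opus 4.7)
The plan is to verify each of (i) and (ii) directly against Definition \ref{importantdefinition}, so I must show that, for an arbitrary entourage $E \in \mathcal{E}$, the set $E[A \cup D_1]$ (respectively $E[A \setminus D_1]$) is contained in $B \setminus D_2$ (respectively $B \cup D_2$) up to a bounded set.

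The preliminary lemma driving everything is this: if $D \in \mathcal{B}$ and $E \in \mathcal{E}$, then $E[D] \in \mathcal{B}$. The case $D = \emptyset$ is immediate, and otherwise $D \times D \in \mathcal{E}$ by definition of boundedness, so by the composition and inverse axioms for a coarse structure, $E \circ (D \times D) \circ E^{-1} \in \mathcal{E}$. A direct check shows that $E[D] \times E[D] \subseteq E \circ (D \times D) \circ E^{-1}$, whence $E[D]$ is bounded. This is really the only non-trivial step in the lemma; once it is in hand the rest is bookkeeping with unions and complements.

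For (i), fix an entourage $E \in \mathcal{E}$. From $E[A \cup D_1] = E[A] \cup E[D_1]$ and $A \prec B$, there is $K \in \mathcal{B}$ with $E[A] \subseteq B \cup K$, and by the preliminary observation $E[D_1] \in \mathcal{B}$. Since $B \subseteq (B \setminus D_2) \cup D_2$, we get
\[
E[A \cup D_1] \subseteq (B \setminus D_2) \cup (D_2 \cup K \cup E[D_1]),
\]
where the parenthesized set is bounded as a finite union of elements of $\mathcal{B}$. Hence $A \cup D_1 \prec B \setminus D_2$.

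Part (ii) is even easier, and in fact needs no appeal to the preliminary lemma: for any $E \in \mathcal{E}$, using $A \setminus D_1 \subseteq A$ together with $A \prec B$ yields $E[A \setminus D_1] \subseteq E[A] \subseteq B \cup K \subseteq (B \cup D_2) \cup K$ with $K \in \mathcal{B}$, giving $A \setminus D_1 \prec B \cup D_2$. The main (and only) obstacle is the boundedness of $E[D_1]$ used in (i); everything else is routine set algebra.
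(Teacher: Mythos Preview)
Your proof is correct. The paper itself merely writes ``Straightforward'' for this lemma, and what you have supplied is exactly the routine verification the authors had in mind: the only substantive point is that $E[D]$ is bounded whenever $D$ is (your preliminary lemma), after which both parts reduce to elementary set inclusions.
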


\begin{proof}
Straighforward.
\end{proof}

The reader unfamiliar with large scale structures can take (\ref{itemm2}) of the following proposition as the definition of the large scale normality given in \cite{DydakandWeighill}.

\begin{proposition}
Let $(X,\mathcal{E})$ be a connected coarse structure. Then the following are equivalent:
\begin{enumerate}[(i)]
\item $(X,\mathcal{E})$ is coarsely normal, \label{itemm1}
\item For any $A,B \subseteq X,$ define $A \prec^* B$ if $A\subseteq B$ and $A \prec B.$ Then $A\prec^* B$ implies that there exists a subset $C\subseteq X$ satisfying $A\prec^* C\prec^* B$. \label{itemm2}
\end{enumerate}
\end{proposition}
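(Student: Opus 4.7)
The plan is to prove both implications separately, using two elementary consequences of Definition \ref{importantdefinition} which we will quietly invoke: $\prec$ is monotone (if $A'\subseteq A\prec B\subseteq B'$ then $A'\prec B'$, since $E[A']\subseteq E[A]\subseteq B\cup K\subseteq B'\cup K$), and $\prec$ is additive on finite unions in the first argument ($A_1\prec B$ and $A_2\prec B$ imply $A_1\cup A_2\prec B$, since a union of two bounded sets is bounded). The real work will be bookkeeping with bounded perturbations via Lemma \ref{lemma1} and Remark \ref{importantremark}, which lets us freely add a bounded set to one side of $\prec$ while subtracting one from the other.

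For $(\ref{itemm1})\Longrightarrow(\ref{itemm2})$, assume $A\prec^{*}B$, so $A\subseteq B$ and $A\prec B$. Coarse normality produces $C_{0}$ with $A\prec C_{0}\prec B$. By Remark \ref{importantremark} applied to $C_{0}\prec B$, pick a bounded $K$ with $C_{0}\setminus K\subseteq B$, and set
\[
C:=(C_{0}\setminus K)\cup A.
\]
Since $A\subseteq B$ and $C_{0}\setminus K\subseteq B$, we have $A\subseteq C\subseteq B$. Lemma \ref{lemma1}(i) applied to $A\prec C_{0}$ (with $D_{1}=\emptyset$, $D_{2}=K$) gives $A\prec C_{0}\setminus K$, hence $A\prec C$ by monotonicity. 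On the other side, $C_{0}\setminus K\prec B$ by monotonicity in the first argument, and $A\prec B$ is part of the hypothesis, so union-additivity yields $C=(C_{0}\setminus K)\cup A\prec B$. Together, $A\prec^{*}C\prec^{*}B$.

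For $(\ref{itemm2})\Longrightarrow(\ref{itemm1})$, assume (ii) and let $A\prec B$. By Remark \ref{importantremark} there is a bounded $K_{0}$ with $A\setminus K_{0}\subseteq B$. Setting $A':=A\setminus K_{0}$, Lemma \ref{lemma1}(ii) applied to $A\prec B$ (with $D_{1}=K_{0}$, $D_{2}=\emptyset$) gives $A'\prec B$, so $A'\prec^{*}B$. By (ii) there is $C$ with $A'\prec^{*}C\prec^{*}B$; in particular $A'\prec C$ and $C\prec B$. Lemma \ref{lemma1}(i) applied to $A'\prec C$ (with $D_{1}=K_{0}$, $D_{2}=\emptyset$) gives $A'\cup K_{0}\prec C$, and since $A\subseteq A'\cup K_{0}$ monotonicity yields $A\prec C$. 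So $A\prec C\prec B$, establishing coarse normality.

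The only conceptual step is the construction $C=(C_{0}\setminus K)\cup A$ in the first direction: the coarse-normality hypothesis only promises an intermediate set up to bounded error in its set-theoretic containments, and the subtle point is that shrinking $C_{0}$ by a bounded set preserves $A\prec C_{0}$ (which is where one really uses Lemma \ref{lemma1}, not just monotonicity), so that enlarging back by $A$ lands us with a set genuinely sandwiched between $A$ and $B$. The reverse direction is a mirror image and the bookkeeping is again exactly what Lemma \ref{lemma1} is designed for.
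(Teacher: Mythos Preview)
Your proof is correct and follows essentially the same strategy as the paper: in both directions one perturbs by bounded sets using Lemma~\ref{lemma1} (and Remark~\ref{importantremark}) to pass between $\prec$ and $\prec^{*}$. The only cosmetic difference is in the construction of the intermediate set for $(\ref{itemm1})\Rightarrow(\ref{itemm2})$: the paper takes $C=(C'\cup D_1)\setminus D_2$ with $D_1\subseteq A$ and $D_2\subseteq X\setminus B$, whereas you take $C=(C_0\setminus K)\cup A$; both achieve the required sandwiching $A\subseteq C\subseteq B$ by the same bounded-perturbation bookkeeping, and the $(\ref{itemm2})\Rightarrow(\ref{itemm1})$ direction is identical.
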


\begin{proof}
To show $(\ref{itemm1}) \implies (\ref{itemm2})$, assume $A\prec^* B,$ i.e., $A \subseteq B$ and $A \prec B$. By coarse normality, this implies the existence of $C' \subseteq X$ such that $A \prec C' \prec B.$ In particular, this shows that there exist bounded sets $D_1$ and $D_2$ such that $A \subseteq C' \cup D_1$ and $C' \subseteq B \cup D_2.$ We can assume that $D_1 \subseteq A$ and $D_2 \subseteq (X \setminus B).$ Set $C=(C' \cup D_1) \setminus D_2.$ By repeated application of Lemma \ref{lemma1}, we have that $A \prec C \prec B.$ Also, $A \subseteq C \subseteq B,$ which follows from the fact that $D_2 \cap A = \emptyset$ (which in particular shows that $D_2 \cap D_1 = \emptyset$). To show $(\ref{itemm2}) \implies (\ref{itemm1}),$ assume $A\prec B.$ In particular, this means that $ \triangle [A]=A \subseteq B \cup D$ for some bounded set $D.$  By Lemma \ref{lemma1}, this means that $A \setminus D \prec^* B,$ and thus there exists $C \subseteq X$ such that $A\setminus D \prec^*C \prec^* B.$  In particular, this means that $A\setminus D \prec C \prec B,$ and by Lemma \ref{lemma1}, we have $A \prec C \prec B.$
\end{proof}

Now we will show that in case of coarse spaces, coarse normality is also equivalent to asymptotic normality, defined in \cite{Honari}. For the convenience of the reader, we recall the definition here.

\begin{definition}
An asymptotic resemblance space $(X,\lambda)$ is \textbf{asymptotically normal} if for all asymptotically disjoint subsets $A_{1},A_{2}\subseteq X$, there are subsets $X_{1},X_{2}\subseteq X$ such that $X=X_{1}\cup X_{2}$, $A_{1}$ is asymptotically disjoint from $X_{1}$, and $A_{2}$ is asymptotically disjoint from $X_{2}$.
\end{definition}

\begin{proposition}
Let $(X,\mathcal{E})$ be a coarse space and $\lambda$ the asymptotic resemblance induced by $\mathcal{E}.$ Then the following are equivalent:
\begin{enumerate}[(i)]
\item $(X,\mathcal{E})$ is coarsely normal, \label{itemitem1}
\item $(X,\lambda)$ is asymptotically normal. \label{itemitem2}
\end{enumerate}
\end{proposition}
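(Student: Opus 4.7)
The plan is to pivot both directions on the dictionary provided by Proposition~\ref{equivalentcondition}: namely, $A \prec B$ is equivalent to $A$ and $X \setminus B$ being asymptotically disjoint. Once this translation is in place, the two normality conditions become purely set-theoretic rearrangements of each other via complementation, so the bulk of the argument is bookkeeping. I would also use, without fanfare, the fact that asymptotic disjointness is monotone in each argument (if $A' \subseteq A$, $C' \subseteq C$, and $A, C$ are asymptotically disjoint, then so are $A', C'$), which follows immediately from Definition~\ref{definition6123}.

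For $(\ref{itemitem1}) \Longrightarrow (\ref{itemitem2})$, suppose $A_1, A_2 \subseteq X$ are asymptotically disjoint. Set $B := X \setminus A_2$, so that $X \setminus B = A_2$, and invoke Proposition~\ref{equivalentcondition} to conclude $A_1 \prec B$. Coarse normality yields $C \subseteq X$ with $A_1 \prec C \prec B$. Translating both relations back via Proposition~\ref{equivalentcondition}: $A_1$ is asymptotically disjoint from $X \setminus C$, and $C$ is asymptotically disjoint from $X \setminus B = A_2$. Setting $X_1 := X \setminus C$ and $X_2 := C$ gives $X = X_1 \cup X_2$ with the required asymptotic disjointness properties.

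For $(\ref{itemitem2}) \Longrightarrow (\ref{itemitem1})$, suppose $A \prec B$. By Proposition~\ref{equivalentcondition}, $A$ and $X \setminus B$ are asymptotically disjoint, so asymptotic normality provides $X_1, X_2 \subseteq X$ with $X = X_1 \cup X_2$, $A$ asymptotically disjoint from $X_1$, and $X \setminus B$ asymptotically disjoint from $X_2$. Define $C := X \setminus X_1$; since $X = X_1 \cup X_2$, we have $C \subseteq X_2$. Then $A$ is asymptotically disjoint from $X \setminus C = X_1$, so $A \prec C$ by Proposition~\ref{equivalentcondition}. Moreover, $C \subseteq X_2$ together with monotonicity of asymptotic disjointness gives that $C$ and $X \setminus B$ are asymptotically disjoint, i.e.\ $C \prec B$. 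Hence $A \prec C \prec B$, establishing coarse normality.

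The only step that requires any care is the applicability of Proposition~\ref{equivalentcondition}, which was stated for connected coarse spaces; in the forward direction one also has to be comfortable with the trivial monotonicity of asymptotic disjointness. Neither presents any real obstacle, so the whole argument is essentially a complementation trick: the set $C$ witnessing coarse normality corresponds under complementation to the piece $X_1 = X \setminus C$ witnessing asymptotic normality, and vice versa.
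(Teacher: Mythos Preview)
Your proof is correct and follows essentially the same complementation argument as the paper: both directions hinge on the dictionary $A \prec B \Leftrightarrow A$ and $X\setminus B$ asymptotically disjoint, with the witnessing set $C$ corresponding to one piece of the decomposition $X = X_1 \cup X_2$. The only cosmetic difference is that in $(\ref{itemitem2})\Rightarrow(\ref{itemitem1})$ you take $C = X\setminus X_1$ and invoke monotonicity explicitly for $C\prec B$, whereas the paper takes $C = X_2$ and (tacitly) uses monotonicity for $A\prec C$; your caveat about connectedness in Proposition~\ref{equivalentcondition} is also well taken.
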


\begin{proof}
((\ref{itemitem1}) $\implies$ (\ref{itemitem2})) Assume $A_{1},A_{2}\subseteq X$ such that $A_1$ and $A_2$ are asymptotically disjoint, i.e., $A_1 \prec (X \setminus A_2).$ Thus, there exists $C$ such that $A_1 \prec C \prec (X \setminus A_2).$ Set $X_1= (X \setminus C)$ and $X_2=C.$ Then clearly  $X=X_{1}\cup X_{2}$, $A_{1}$ is asymptotically disjoint from $X_{1}$, and $A_{2}$ is asymptotically disjoint from $X_{2}$.

((\ref{itemitem2}) $\implies$ (\ref{itemitem1})) Assume $A,B \subseteq X$ such that $A \prec B,$ i.e., $A$ and $(X \setminus B)$ are asymptotically disjoint. Thus, there exists $X_{1},X_{2}\subseteq X$ such that $X=X_{1}\cup X_{2}$, $A$ is asymptotically disjoint from $X_{1}$, and $(X \setminus B)$ is asymptotically disjoint from $X_{2}.$ Let $C=X_2.$ Then the following hold:
\begin{enumerate}
\item $A$ is asymptotically disjoint from $X_1=(X \setminus X_2)=(X \setminus C),$
\item $(X \setminus B)$ is asymptotically disjoint from $X_2=C,$
\end{enumerate}
which is the same as saying $A \prec C \prec B.$
\end{proof}

Thanks to the above proposition, it follows from \cite{Honari} that the class of coarsely normal coarse spaces in nonempty. In particular, all metric spaces (with the metric coarse structure) are coarsely normal. Also, notice that in the above proof we used the definition of the $\prec$ relation that involved asymptotic resemblance. In particular, the fact that $\lambda$ was induced by a coarse structure was not used. Therefore, the same proof will show the following proposition:

\begin{proposition}\label{normality_for_asymptotic_resemblance}
Let $(X,\lambda)$ be an asymptotic resemblance space. For any $A,B \subseteq X,$ define $A \prec B$ if and only if $A$ and $X \setminus B$ are asymptotically disjoint. Then the following are equivalent:
\begin{enumerate}[(i)]
\item $A \prec B$ implies that there exists $C \subseteq X$ such that $A \prec C \prec B,$
\item $(X,\lambda)$ is asymptotically normal. \hfill \openbox
\end{enumerate}
\end{proposition}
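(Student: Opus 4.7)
The plan is to re-run the proof of the immediately preceding proposition essentially verbatim. In that proof, the coarse structure $\mathcal{E}$ entered only through the characterization of $A\prec B$ as asymptotic disjointness of $A$ and $X\setminus B$, established in Proposition \ref{equivalentcondition}(\ref{item2}). Here that characterization is \emph{taken as the definition} of $\prec$, so no further input from a coarse structure is required and the whole argument transfers directly to the abstract asymptotic resemblance setting.

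For (i) $\Rightarrow$ (ii), I would begin with asymptotically disjoint $A_1,A_2\subseteq X$, restate this as $A_1\prec X\setminus A_2$, apply hypothesis (i) to get $C$ with $A_1\prec C\prec X\setminus A_2$, and then set $X_1:=X\setminus C$, $X_2:=C$. Unpacking the two $\prec$ relations gives exactly that $A_1$ is asymptotically disjoint from $X_1$ and $A_2$ is asymptotically disjoint from $X_2$ (the latter because $C\prec X\setminus A_2$ means $C$ is asymptotically disjoint from $X\setminus(X\setminus A_2)=A_2$), while $X_1\cup X_2=X$ is automatic.

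For (ii) $\Rightarrow$ (i), I would start from $A\prec B$, i.e.\ $A$ and $X\setminus B$ asymptotically disjoint, apply asymptotic normality to obtain $X_1,X_2$ with $X=X_1\cup X_2$, $A$ asymptotically disjoint from $X_1$, and $X\setminus B$ asymptotically disjoint from $X_2$, and then put $C:=X_2$. The relation $C\prec B$ is immediate from the second clause. To conclude $A\prec C$, I would use $X\setminus C\subseteq X_1$ together with the elementary fact that asymptotic disjointness is inherited by subsets: any unbounded subset of $X\setminus C$ is in particular an unbounded subset of $X_1$, so asymptotic disjointness of $A$ from $X_1$ yields asymptotic disjointness of $A$ from $X\setminus C$.

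The only step that requires any real care is the backward direction: asymptotic normality does not assert that $X_1,X_2$ are disjoint, only that they cover $X$, so one must resist identifying $X\setminus C$ with $X_1$ and instead exploit the inclusion $X\setminus C\subseteq X_1$ together with hereditariness of asymptotic disjointness. This is genuinely the only subtle point; everything else is bookkeeping, which is why the authors feel entitled to invoke the preceding proof without modification.
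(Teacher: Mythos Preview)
Your proposal is correct and follows the paper's approach exactly: the paper's proof is literally a pointer to the preceding proposition, whose argument you have reproduced. Your treatment of the backward direction is in fact slightly more careful than the paper's, which writes $X_1=(X\setminus X_2)$ outright; you correctly observe that in general one only has the inclusion $X\setminus C\subseteq X_1$ and that hereditariness of asymptotic disjointness under subsets is what closes the gap.
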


At this point the reader may be wondering if there exist coarse spaces that are not coarsely normal. Indeed, in \cite{DydakandWeighill} it is shown that there exist such coarse spaces. The following example was inspired by Corollary $11.4$ in that paper (translated to the setting of coarse spaces):
\begin{example}
Let $X=\mathbb{R}^+$ and let $\mathcal{E}'$ be the collection of subsets of $\mathbb{R}^{+} \times \mathbb{R}^{+}$ that consists of finitely many half-lines starting at the $y$ or $x$ axis and parallel to the diagonal. Let $\mathcal{E}$ be the collection of all the subsets of elements of $\mathcal{E'}.$ Then it is easy to see that $\mathcal{E}$ is a coarse structure whose bounded sets are the subsets of $\mathbb{R}^+$ of finite cardinality. Let $A=(0,1)$ and let $B=\mathbb{R}^+ \setminus \mathbb{N}.$ It is clear that $A \prec B.$ Also, notice that any $C\subseteq X$ such that $A \prec C$ needs to contain a set of the form $\mathbb{R}^+ \setminus D,$ where $D$ is a sequence of points diverging to infinity (it is because for any $x \in \mathbb{R}^+$ we can always find $E \in \mathcal{E}$ such that $(0,x) \subseteq E[A]$). However, since we can always draw a half-line parallel to the diagonal that misses countably many points (more precisely, misses all the points in $D \times D$), there exists $E \in \mathcal{E}$ such that $E[C] \supseteq E[\mathbb{R}^+ \setminus D]=\mathbb{R}^+,$ i.e., $E[C]=\mathbb{R}^+.$ But this means that $C \not\prec B$ for any $C$ such that $A \prec C,$ i.e., $(X, \mathcal{E})$ is not coarsely normal.
\end{example}

\section{Coarse Proximities Induced by Coarse Structures and Asymptotic Resemblances}\label{inducing coarse proximities}

Finally, we are ready to prove that $\prec$ relation on a connected coarsely normal space induces a coarse proximity. In the proof, we utilize the characterization of the $\prec$ relation that uses asymptotic resemblance induced by the given coarse structure (see Proposition \ref{equivalentcondition}).

\begin{theorem}\label{inducingproximity}
	Let $(X,\mathcal{E})$ be a connected coarse space and $\mathcal{B}$ the bornology induced by $\mathcal{E}$. The relation $\prec$ induces a coarse proximity on the pair $(X,\mathcal{B})$ if and only if $(X,\mathcal{E})$ is coarsely normal. 
\end{theorem}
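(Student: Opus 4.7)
The plan is to invoke the characterization of coarse proximities given in Theorem \ref{<<inducesproximity}: it suffices to show that $\prec$ satisfies properties (1)--(6) of Theorem \ref{propertiesofcoarseneighborhoods} if and only if $(X,\mathcal{E})$ is coarsely normal. Since property (6) is \emph{verbatim} the definition of coarse normality, the forward direction is essentially free: if $\prec$ induces a coarse proximity on $(X,\mathcal{B})$, then by Theorem \ref{propertiesofcoarseneighborhoods}(\ref{intermediate}), for every $A \prec B$ there exists $C$ with $A \prec C \prec B$, which is exactly coarse normality.

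For the backward direction, assume $(X,\mathcal{E})$ is coarsely normal, so that property (6) holds by assumption. It remains to verify (1)--(5). Most of these are direct from the definition of $\prec$. Specifically, (1) follows because $E[X] = X = (X\setminus D)\cup D$ for any bounded $D$; (2) is Remark \ref{importantremark} (apply the defining condition of $\prec$ to $E = \triangle$); (3) is immediate from the monotonicity $A\subseteq B$ implies $E[A] \subseteq E[B]$, combined with enlarging the image set; and (4) follows by taking unions of the bounded exceptional sets, since $E[A]\setminus (B_1\cap B_2) \subseteq (E[A]\setminus B_1)\cup(E[A]\setminus B_2)$ is bounded whenever each summand is.

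The only property requiring care is the complement symmetry (5), which is not obvious from the definition of $\prec$ using entourages. Here I would pass to the equivalent characterization via asymptotic resemblance, namely Proposition \ref{equivalentcondition}(\ref{item2}): $A \prec B$ iff $A$ and $X\setminus B$ are asymptotically disjoint. Since asymptotic disjointness is manifestly symmetric in its two arguments (the condition $A'\bar{\lambda}C'$ for unbounded $A'\subseteq A$, $C'\subseteq C$ is symmetric), we immediately get that $A \prec B$ iff $X\setminus B$ and $X\setminus(X\setminus A) = A$ are asymptotically disjoint, iff $X\setminus B \prec X\setminus A$.

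The main obstacle, if any, is simply keeping track of which characterization of $\prec$ is most convenient for each axiom — the entourage definition handles (1)--(4) cleanly but makes (5) opaque, while the asymptotic-disjointness reformulation makes (5) trivial. With properties (1)--(6) all verified, Theorem \ref{<<inducesproximity} applies and yields a coarse proximity relation $\mathbf{b}$ on $(X,\mathcal{B})$ whose coarse neighborhood relation is precisely $\prec$, completing the proof.
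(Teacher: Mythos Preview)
Your proof is correct and follows the same high-level strategy as the paper: verify properties (1)--(6) of Theorem~\ref{propertiesofcoarseneighborhoods} for $\prec$, with (6) being exactly coarse normality. The difference lies in which characterization of $\prec$ you deploy at each step. The paper works \emph{entirely} through the asymptotic-resemblance characterization of Proposition~\ref{equivalentcondition}(\ref{item2}); in particular, its verification of property~(4) passes through an external result (Proposition~2.6 of \cite{Honari}) to split an unbounded subset of $(X\setminus B_1)\cup(X\setminus B_2)$ and find a matching piece of $A'$. Your argument for (1)--(4) stays with the entourage definition and is more elementary and self-contained --- the inclusion $E[A]\setminus(B_1\cap B_2)\subseteq (E[A]\setminus B_1)\cup(E[A]\setminus B_2)$ handles (4) in one line with no appeal to properties of $\lambda$. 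You then switch to asymptotic disjointness only where it genuinely simplifies matters, namely for the complement symmetry~(5). This mixed approach buys you a shorter and more direct argument; the paper's uniform use of $\lambda$ buys the immediate extension to general asymptotic resemblance spaces (Corollary~\ref{asymptotic_resemblance_corollary}), since none of its steps depend on $\lambda$ coming from a coarse structure.
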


\begin{proof}
 If $\prec$ induces a coarse proximity on the pair $(X,\mathcal{B}),$ then $(X,\mathcal{E})$ is coarsely normal by (\ref{intermediate}) of Theorem \ref{propertiesofcoarseneighborhoods}. To prove the converse, assume that $(X,\mathcal{E})$ is coarsely normal. To show that $\prec$ induces a coarse proximity, it is enough to show that the relation $\prec$ satisfies $(1)$ through $(6)$ of Theorem \ref{propertiesofcoarseneighborhoods}. To show (\ref{boundeddontmatter}), let $D \in \mathcal{B}$ be arbitrary. Since subsets of bounded sets are bounded, there is no such $D' \subseteq D$ such that $D'$ is unbounded. Therefore, $X \bar{\lambda} D$ is satisfied vacuously, i.e., $X \prec (X \setminus D).$ To show (\ref{containment}), assume $A \prec B.$  For contradiction, assume that  $C:=A \cap (X \setminus B)$ is unbounded. Then $C \subseteq A,$ $C\subseteq (X \setminus B),$ and $C$ is unbounded. By Proposition $2.22$ of \cite{Honari}, we have that $C \lambda C,$ which contradicts the fact that $A$ is asymptotically disjoint from $(X \setminus B).$ Thus, it has to be that $A \cap (X \setminus B)$ is bounded, i.e., $A$ is contained in $B$ up to some bounded set. To show (\ref{subsetandsuperset}), assume that $A \subseteq B \prec C \subseteq D.$ If $A \not\prec D,$ then there exist unbounded $A' \subseteq A \subseteq B$ and unbounded $D' \subseteq (X \setminus D) \subseteq (X \setminus C)$ such that $A' \lambda D',$ a contradiction to $ B \prec C.$ So it has to be that $A \prec D.$ To show (\ref{intersection}), assume $A \prec B_1$ and $A \prec B_2,$ i.e., $A$ is asymptotically disjoint from $(X \setminus B_1)$ and $(X \setminus B_2).$ For contradiction, assume that $A\not\prec (B_1 \cap B_2),$ i.e., there exists unbounded $A' \subseteq A$ and unbounded $C' \subseteq X \setminus (B_1 \cap B_2), $ such that $A' \lambda C'.$ However, notice that $X \setminus (B_1 \cap B_2) = (X \setminus B_1) \cup (X \setminus B_2).$ Thus, there has to exist unbounded $C'' \subseteq C'$ such that $C'' \subseteq (X \setminus B_1)$ or $C'' \subseteq (X \setminus B_2)$ (otherwise $C'$ would be bounded, being the union of two bounded sets). Without loss of generality assume that $C'' \subseteq (X \setminus B_1).$ Notice that since $A' \lambda C',$ by Proposition 2.6 of \cite{Honari}, there exists $A'' \subseteq A'$ such that $A'' \lambda C''.$ Clearly $A''$ has to be unbounded (for if it is bounded, then there exists $x \in X$ such that $x \lambda A'' \lambda C'',$ contradicting the fact that $C''$ is unbounded). So we have unbounded $A'' \subseteq A,$ and unbounded $C'' \subseteq (X \setminus B_1)$ such that $A'' \lambda C'',$ a contradiction to $A \prec B_1.$ So it has to be the case that $A\prec (B_1 \cap B_2).$ To show the converse, assume $A\prec (B_1 \cap B_2).$ If without loss of generality $A \not\prec B_1,$ then there exist unbounded $A' \subseteq A$ and unbounded $C' \subseteq (X \setminus (B_1))\subseteq (X \setminus (B_1 \cap B_2))$ such that $A' \lambda C',$ a contradiction to $A\prec (B_1 \cap B_2).$ To show (\ref{neighborhoodofcomplement}), assume $A \prec B$ and for contradiction assume that $(X\setminus B) \not\prec (X \setminus A).$ Then there exist unbounded $B' \subseteq (X \setminus B)$ and unbounded $A' \subseteq (X \setminus (X \setminus A))=A$ such that $B' \lambda A',$ which contradicts $A \prec B.$ The converse is shown similarly. Finally, (\ref{intermediate}) is the coarse normality.
\end{proof}

\begin{corollary}
Let $(X,\mathcal{E})$ be a connected coarsely normal coarse space, $\lambda$ the asymptotic resemblance induced by $\mathcal{E},$ $\mathcal{B}$ the bornology induced by $\mathcal{E},$ and $A,B\subseteq X$ any two subsets. Define the relation ${\bf b}$ on the power set of $X$ by any of the following equivalent conditions:
\begin{enumerate}[(i)]
\item $A {\bf b} B$ if and only if there exists $E \in \mathcal{E}$ such that $E[A] \cap B$ is unbounded,
\item $A {\bf b} B$ if and only if there exists an unbounded $A' \subseteq A$ and an unbounded $B' \subseteq B$ such that $A' \lambda B',$
\item $A {\bf b} B$ if and only if there exists $E \in \mathcal{E}$ such that for all $D \in \mathcal{B},$ 
\[\Bigl((A\setminus D) \times (B \setminus D) \Bigr)\cap E \neq \emptyset.\]
\end{enumerate}
Then ${\bf b}$ is a coarse proximity.
\end{corollary}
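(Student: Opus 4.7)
The plan is to deduce this corollary directly from Theorem \ref{inducingproximity} and Proposition \ref{equivalentcondition}, reducing everything to bookkeeping about complements.

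First I would invoke Theorem \ref{inducingproximity}: since $(X,\mathcal{E})$ is connected and coarsely normal, the relation $\prec$ satisfies properties (1)--(6) of Theorem \ref{propertiesofcoarseneighborhoods}, so by Theorem \ref{<<inducesproximity} it induces a coarse proximity ${\bf b}$ on $(X,\mathcal{B})$ via the rule $A\bar{\bf b}B \iff A\prec(X\setminus B)$. So to establish the corollary it suffices to show that each of (i), (ii), (iii) is equivalent to $A\not\prec(X\setminus B)$.

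Next, I would apply Proposition \ref{equivalentcondition} with the second argument $B$ there replaced by $X\setminus B$. The proposition then gives three equivalent formulations of $A\prec(X\setminus B)$, and I would negate each one in turn. For (ii), the equivalence is immediate: the negation of ``$A$ and $X\setminus(X\setminus B)=B$ are asymptotically disjoint'' is exactly the existence of unbounded $A'\subseteq A$ and $B'\subseteq B$ with $A'\lambda B'$. For (iii), the negation of ``for every $E$ there exists $D\in\mathcal{B}$ with $((A\setminus D)\times(B\setminus D))\cap E=\emptyset$'' is precisely the statement in (iii) of the corollary (noting $X\setminus(X\setminus B)=B$). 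For (i), the negation of ``for every $E\in\mathcal{E}$ there is a bounded $K$ with $E[A]\subseteq(X\setminus B)\cup K$'' is that some $E\in\mathcal{E}$ makes $E[A]\cap B$ fail to be contained in any bounded set; since inclusion in a bounded set is equivalent to boundedness, this is equivalent to $E[A]\cap B$ being unbounded. This last step is the only one with any content beyond rewriting: I need to observe that $E[A]\subseteq(X\setminus B)\cup K$ is equivalent to $E[A]\cap B\subseteq K$, and that a set fails to lie inside every bounded set precisely when it is unbounded.

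The main (and only) potential obstacle is the complement translation in (i), which requires keeping straight the difference between ``$A\cap(X\setminus B)$ is bounded'' and ``$E[A]\cap B$ is unbounded.'' Beyond that, the proof is essentially the observation that the three formulations of $A\prec(X\setminus B)$ provided by Proposition \ref{equivalentcondition} negate cleanly into the three formulations of $A{\bf b}B$ listed in the corollary, so the coarse proximity produced by Theorem \ref{inducingproximity} admits each of the advertised descriptions.
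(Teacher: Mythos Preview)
Your proposal is correct and follows exactly the paper's approach: the paper's proof is the single sentence ``This is a direct consequence of Theorem \ref{inducingproximity}, Proposition \ref{equivalentcondition}, and Theorem \ref{<<inducesproximity},'' and you have simply unpacked how those three results combine. The complement bookkeeping you worry about in (i) is fine, since $E[A]\subseteq(X\setminus B)\cup K$ is indeed equivalent to $E[A]\cap B\subseteq K$, and downward closure of $\mathcal{B}$ handles the rest.
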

\begin{proof}
This is a direct consequence of Theorem \ref{inducingproximity}, Proposition \ref{equivalentcondition}, and Theorem \ref{<<inducesproximity}.
\end{proof}

\begin{remark}
Notice that $(iii)$ of the above corollary is in line with the definition of the metric coarse proximity given in \cite{Coarseproximity}, where two subsets $A$ and $B$ of a metric space $(X,d)$ are coarsely close if and only if there exists $\epsilon < \infty$ such that for all bounded sets $D$, there exists $a \in (A \setminus D)$ and $b \in (B \setminus D)$ such that $d(a,b) < \epsilon.$
\end{remark}

Since in the proof of Theorem \ref{inducingproximity} we have used the characterization of the $\prec$ relation that uses the induced asymptotic resemblance, the proof of that theorem also shows that connected asymptotically normal asymptotic resemblance spaces naturally induce coarse proximities, as in the following corollary.

\begin{corollary}\label{asymptotic_resemblance_corollary}
	Let $(X,\lambda)$ be a connected asymptotic resemblance space and $\mathcal{B}$ the bornology induced by $\lambda$. For any $A,B \subseteq X,$ define $A \prec B$ if and only if $A$ and $X \setminus B$ are asymptotically disjoint. The relation $\prec$ induces a coarse proximity on the pair $(X,\mathcal{B})$ if and only if $(X,\mathcal{E})$ is asymptotically normal. 
\end{corollary}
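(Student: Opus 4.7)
The plan is to observe that the proof of Theorem \ref{inducingproximity} was already written entirely in the language of asymptotic disjointness (through Proposition \ref{equivalentcondition}(\ref{item2})) rather than in terms of entourages, and therefore transfers to the present setting with essentially no modification. The target, by Theorem \ref{<<inducesproximity}, is to show that under asymptotic normality the relation $\prec$ defined here satisfies properties (1) through (6) of Theorem \ref{propertiesofcoarseneighborhoods}, and that conversely the intermediate property (6) forces asymptotic normality.

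For the forward direction, assume $\prec$ induces a coarse proximity on $(X,\mathcal{B})$. Then property (\ref{intermediate}) of Theorem \ref{propertiesofcoarseneighborhoods} yields that whenever $A \prec B$ there exists $C$ with $A \prec C \prec B$. Since here $A \prec B$ means exactly that $A$ and $X \setminus B$ are asymptotically disjoint, Proposition \ref{normality_for_asymptotic_resemblance} immediately upgrades this to the assertion that $(X,\lambda)$ is asymptotically normal.

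For the reverse direction, assume $(X,\lambda)$ is asymptotically normal. I would verify properties (\ref{boundeddontmatter})--(\ref{neighborhoodofcomplement}) of Theorem \ref{propertiesofcoarseneighborhoods} by mirroring the corresponding arguments in the proof of Theorem \ref{inducingproximity} line-by-line: each of those arguments reduces $\prec$ to asymptotic disjointness using the defining condition of the corollary, and then invokes only generic asymptotic resemblance facts such as reflexivity $C\lambda C$ on nonempty sets (Proposition 2.22 of \cite{Honari}) and the decomposition of an asymptotic resemblance along a subset (Proposition 2.6 of \cite{Honari}). Property (\ref{intermediate}), the existence of an intermediate set, is then exactly the statement of asymptotic normality by Proposition \ref{normality_for_asymptotic_resemblance}. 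Having verified all six conditions, Theorem \ref{<<inducesproximity} concludes that $\prec$ induces a coarse proximity on $(X,\mathcal{B})$.

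The main obstacle, if one can call it that, is purely bookkeeping: one has to confirm that the Theorem \ref{inducingproximity} proof never secretly reaches past asymptotic resemblance to use the underlying coarse structure. Inspection shows that it does not --- every invocation of $E \in \mathcal{E}$ there is routed through the asymptotic resemblance characterization of $\prec$ --- so the corollary follows essentially by rereading the earlier proof with $\lambda$ in place of $\lambda_{\mathcal{E}}$.
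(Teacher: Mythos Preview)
Your proposal is correct and takes essentially the same approach as the paper: the paper's proof is the one-line observation that the corollary follows from Proposition \ref{normality_for_asymptotic_resemblance} together with the proof of Theorem \ref{inducingproximity}, precisely because that proof was carried out entirely via the asymptotic-disjointness characterization of $\prec$. Your write-up simply spells out this reasoning in more detail, and your bookkeeping check that the Theorem \ref{inducingproximity} argument never uses the entourage description is exactly the point the paper makes in the paragraph preceding the corollary.
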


\begin{proof}
This is a direct consequence of Proposition \ref{normality_for_asymptotic_resemblance} and the proof of Theorem \ref{inducingproximity}.
\end{proof}

\begin{corollary}
Let $(X,\lambda)$ be a connected asymptotically normal asymptotic resemblance space, $\mathcal{B}$ the bornology induced by $\lambda,$ and $A,B\subseteq X$ any two subsets. Define the relation ${\bf b}$ on the power set of $X$ by 
\vspace{2mm}

$A {\bf b} B$ if and only if there exists an unbounded $A' \subseteq A$ and an unbounded $B' \subseteq B$ such that $A' \lambda B',$
\vspace{2mm}

\noindent i.e., $A$ and $B$ are not asymptotically disjoint. Then ${\bf b}$ is a coarse proximity.
\end{corollary}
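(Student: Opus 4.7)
The plan is to derive this corollary directly from Corollary~\ref{asymptotic_resemblance_corollary} and Theorem~\ref{<<inducesproximity}, by identifying the relation $\mathbf{b}$ given in the statement with the coarse proximity induced by $\prec$.

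First I would invoke Corollary~\ref{asymptotic_resemblance_corollary}: since $(X,\lambda)$ is connected and asymptotically normal, the relation $\prec$ (where $A \prec B$ iff $A$ and $X\setminus B$ are asymptotically disjoint) induces a coarse proximity on the pair $(X,\mathcal{B})$. Unpacking this via Theorem~\ref{<<inducesproximity}, the relation $\tilde{\mathbf{b}}$ defined by $A\,\bar{\tilde{\mathbf{b}}}\,B \iff A \prec (X\setminus B)$ is then a coarse proximity on $X$.

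The remaining step is to verify that $\tilde{\mathbf{b}}$ coincides with the relation $\mathbf{b}$ stated in the corollary, which is a pure definition chase. One has $A\,\bar{\tilde{\mathbf{b}}}\,B$ iff $A \prec (X\setminus B)$ iff $A$ and $X\setminus(X\setminus B) = B$ are asymptotically disjoint, i.e., iff for all unbounded $A' \subseteq A$ and unbounded $B' \subseteq B$ one has $A'\,\bar{\lambda}\,B'$. Negating this equivalence yields $A\,\tilde{\mathbf{b}}\,B$ iff there exist unbounded $A' \subseteq A$ and unbounded $B' \subseteq B$ with $A' \lambda B'$, which is precisely the definition of $\mathbf{b}$ in the statement. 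Hence $\mathbf{b} = \tilde{\mathbf{b}}$, so $\mathbf{b}$ is a coarse proximity.

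There is no substantive obstacle here, as all the real work was already performed in Corollary~\ref{asymptotic_resemblance_corollary} (which in turn leans on the proof of Theorem~\ref{inducingproximity}). The only minor care required is tracing through the double complement $X\setminus(X\setminus B) = B$ and observing that the symmetric role of $A, A'$ and $B, B'$ in the stated formulation matches the symmetry already present in the definition of asymptotic disjointness.
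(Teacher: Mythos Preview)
Your proposal is correct and follows exactly the paper's approach: the paper's proof simply states that the result is a direct consequence of Corollary~\ref{asymptotic_resemblance_corollary} and Theorem~\ref{<<inducesproximity}, and you have spelled out precisely that deduction, including the definition chase through the double complement.
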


\begin{proof}
This is a direct consequence of Corollary \ref{asymptotic_resemblance_corollary} and Theorem \ref{<<inducesproximity}.
\end{proof}

\begin{question}
Is every coarse proximity space induced by some coarse structure / asymptotic resemblance structure?
\end{question}

\bibliographystyle{abbrv}
\bibliography{An_alternative_description_of_coarse_proximities}{}

\end{document}